\newcommand{\node}[1]{ *+[Fo]{#1} }
\newcommand{\exG}[5]{ \left( \ \raisebox{0.4cm}{\xymatrix@C-5ex@R-4ex{\node{#1} & & \node{#2} \ar@{-}[ll] \\ & \node{#3}\ar@{-}[ul] & }} \ , \ \xymatrix@-2ex{ \node{#4} \ar@{}[r] & \node{#5} } \ \right)}
\newtheorem{mylemma}{Lemma}
\newtheorem{mytheorem}[mylemma]{Theorem}
\newtheorem{myconjecture}[mylemma]{Conjecture}
\theoremstyle{definition}
\newtheorem{mydefinition}[mylemma]{Definition}
\newcommand{\R}{\mathbb{R}}
\newcommand{\Q}{\mathbb{Q}}
\newcommand{\C}{\mathcal{C}}
\newcommand{\pmat}[1]{\begin{pmatrix} #1 \end{pmatrix}}
\begin{document}

\begin{abstract}
Consider a variant of the graph diameter of a polyhedron where each step in a walk between two vertices travels maximally in a circuit direction instead of along incident edges. Here circuit directions are non-trivial solutions to minimally-dependent subsystems of the presentation of the polyhedron. These can be understood as the set of all possible edge directions, including edges that may arise from translation of the facets.

It is appealing to consider a circuit analogue of the Hirsch conjecture for graph diameter, as suggested by Borgwardt et al.~\cite{circuitarxiv}. They ask whether the known counterexamples to the
Hirsch conjecture give rise to counterexamples for this relaxed notion of circuit diameter. We show that the most basic counterexample to the unbounded Hirsch conjecture, the Klee-Walkup polyhedron,
does have a circuit diameter that satisfies the Hirsch bound, regardless of representation. We also examine the circuit diameter of the bounded Klee-Walkup polytope.
\end{abstract}

%% FRONT MATTER
\title{Circuit Diameter and Klee-Walkup Constructions}
\author{Tamon Stephen}
\author{Timothy Yusun}
\address{Department of Mathematics \\
Simon Fraser University \\
8888 University Drive \\
Burnaby, B.C. V5A 1S6\\
Canada
}
% \email{$\{$tamon,tyusun$\}$@sfu.ca} %% Omit for public posting.
\keywords{Circuit diameter, Hirsch conjecture, Klee-Walkup polyhedron}
\date{\today}
\maketitle

%% SECTION: Introduction
\section{Introduction}\label{intro}

Given a convex polyhedron, its \emph{graph} (also called its \emph{skeleton}) is the undirected graph formed by its vertices and edges. The \emph{graph diameter} of a polyhedron is then defined to be
the graph diameter of its skeleton. This is an interesting quantity since it gives us a lower bound on the performance of the simplex method for linear programming. It turns out that most known
polyhedra have diameters at most the \emph{Hirsch bound} of $f - d$, where $f$ and $d$ are the number of facets and the dimension, respectively. The main exceptions are unbounded polyhedra based on
the Klee-Walkup example~\cite{kleewalkup} and non-Hirsch polytopes based on the constructions of Santos~\cite{santoscounterexample}.

Here we consider the circuit diameter, where instead of being restricted to walk along edges of a polyhedron, one can walk in the direction of any `potential edges' obtained by translating its facets.
In particular, the set of permissible directions at a vertex has no connection with edges incident to it; circuit steps can go through the interior of the polyhedron and end when the direction is
traversed as far as possible maintaining feasibility. Because of this, it is challenging to prove anything about the circuit diameter independent of how the polyhedron is realized. In this note, we
show that the original unbounded non-Hirsch polyhedron of Klee and Walkup~\cite{kleewalkup} does satisfy the Hirsch bound in this relaxed framework, independent of realization.

%% SECTION: Circuit Walks and Diameters
\section{Circuit Walks and Diameters}\label{sec:circ}
%% SUBSECTION: Background
\subsection{Background and Definitions}
While the graph diameter of a polyhedron considers walks along its edges, the circuit diameter considers walks that use the \emph{circuits} of a polyhedron, defined as follows:

\begin{mydefinition}\label{circuits}
Given a polyhedron $$P = \{x \in \R^n : A^1x = b^1, A^2x \geq b^2\},$$ where $A^i \in \Q^{d_i \times n}$ and $b^i \in \R^{d_i}$ for $i = 1,2$, the \emph{circuits} or \emph{elementary vectors} $\C(A^1,A^2)$ of $A^1$ and $A^2$ are defined as the set of vectors $g \in \ker(A^1) \ \backslash \ \{0\}$ for which $A^2g$ is support-minimal in the set $\{A^2x : x \in \ker(A^1) \ \backslash \ \{0\}\}$, where $g$ is normalized to coprime integer components.
\end{mydefinition}

It turns out that the set $\C(A^1,A^2)$ consists of exactly the possible edge directions of $P$ for varying $b^1$ and $b^2$ \cite{sturmfels}. Moreover, at any non-optimal feasible point of the linear program $$\min \{c^Tx : A^1x = b^1, A^2x \geq b^2\},$$ an augmenting direction can always be found from the set $\C(A^1,A^2)$.

Now, for a polyhedron $P$ and the set of circuits $\C$ associated with $A^1$ and $A^2$, given two vertices $u$ and $v$ of $P$ define a \emph{circuit walk of length $k$} to be a sequence $u = y^0, \ldots, y^k = v$ with

\vspace{-0.2cm}
\begin{enumerate}
\item $y^i \in P$
\item $y^{i+1} - y^i = \alpha_ig^i$ for some $g^i \in \C$ and $\alpha_i > 0$
\item $y^i + \alpha g_i \not\in P$ for $\alpha > \alpha_i$
\end{enumerate}

for all $i = 0, 1, \ldots, k - 1$. Observe that edge walks from $u$ to $v$ are exactly those circuit walks where each pair $y^i, y^{i+1}$ are adjacent vertices of $P$. Hence by considering all
possible circuit directions at each point $y^i$, instead of directions corresponding to incident edges, we see that the circuit walks are generalizations of edge walks. The \emph{circuit distance}
from $u$ to $v$ is now defined as the length of the shortest circuit walk from $u$ to $v$, and the \emph{circuit diameter} of $P$ is the largest circuit distance between any two vertices of $P$. Observe that the circuit distance is a lower bound for the graph distance.

We emphasize that this notion of circuit distance is not symmetric as it is for the graph distance. The following example from \cite{circuitarxiv} illustrates this:

\begin{figure}[htbp]
\begin{center}
\includegraphics[width=0.65\textwidth]{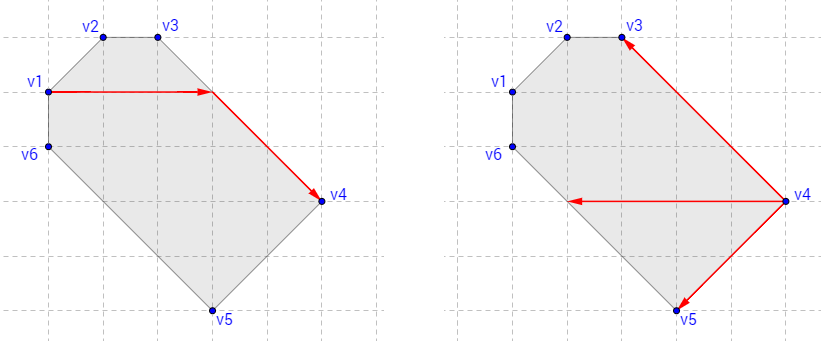}
\caption{The circuit distance from \texttt{v1} to \texttt{v4} is less than the circuit distance from \texttt{v4} to \texttt{v1}.}\label{fig1}
\end{center}
\end{figure}

Moreover, the circuit diameter of a polyhedron is dependent on its realization. This is evident by comparing the hexagon of Figure~\ref{fig1} to a regular hexagon, which has circuit diameter 2. Hence when we speak of `the circuit diameter' of $P$ it is implied that it is taken with respect to some representation of $P$. For more context on circuit diameter, see \cite{circuitarxiv} and \cite{hierarchy}.

%% SUBSECTION: Variants of Hirsch
\subsection{Variants of the Hirsch Conjecture}

There are polyhedra whose circuit diameter and graph diameter are the same -- a trivial example is the $d$-dimensional simplex, which has both graph and circuit diameter equal to 1. Also, take the $d$-dimensional cube with vertices all vectors in $\{0,1\}^d$. Its facet description is $$\{(x_1,x_2,\ldots,x_d) : 0 \leq x_i \leq 1, i = 1,2,\ldots, d\}.$$

This representation of the $d$-cube has circuits $\{\pm e_1, \pm e_2, \ldots, \pm e_d\}$, where $e_i$ is the vector with a $1$ in the $i$th position and $0$'s elsewhere. Hence its circuit diameter is also $d$. It is not clear if this can change in another representation of the $d$-cube.

The simplex and the cube are critical examples that motivated the well-known Hirsch conjecture. The conjecture can be stated as follows:

\begin{myconjecture}[Hirsch, 1957]
Let $f > d \geq 2$. Let $P$ be a $d$-dimensional polyhedron with $f$ facets. Then the combinatorial diameter of $P$ is at most $f - d$.
\end{myconjecture}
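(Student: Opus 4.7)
The natural starting point is the classical reduction to the $d$-step conjecture of Klee and Walkup, which asserts that every bounded $d$-polytope with $2d$ facets has diameter at most $d$. Since the two statements are equivalent in the bounded case, parameterizing by $f - d = d$ leaves dimension as the only induction variable, and the low-dimensional cases $d \leq 3$ are dispatched by Klee's classical arguments on $3$-polytopes.

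For the inductive step, I would pick a facet $F$ incident to one of the two vertices $u, v$, walk along edges of $F$ toward a vertex close to the other, and invoke the inductive hypothesis on $F$, which has dimension $d - 1$ and at most $f - 1$ facets of its own. Auxiliary devices historically deployed in attempts of this shape include the non-revisiting path conjecture (which is known to imply Hirsch) and the wedge/prism constructions that transfer bounds between values of $d$ and $f$ in a controlled way. To close the induction one would need to track exactly how many facets are ``lost'' when restricting to $F$ and to relate walks inside $F$ to walks inside $P$ that begin by leaving $F$.

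The main obstacle is decisive: the conjecture as stated is \emph{false}. As already noted in the introduction, the Klee-Walkup polyhedron~\cite{kleewalkup} refutes it in the unbounded case, and Santos~\cite{santoscounterexample} refutes it in the bounded case, so no proof exists in full generality. What is known in the positive direction consists of restricted results (for example, for $0/1$-polytopes) and weaker asymptotic upper bounds such as the Kalai--Kleitman quasi-polynomial bound. Consequently the statement here functions as motivation rather than a target: the subsequent sections of the paper relax the diameter to allow circuit walks and ask whether the known counterexamples continue to beat the Hirsch bound in this weaker sense, which is precisely the program the authors carry out for the Klee--Walkup examples.
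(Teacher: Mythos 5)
You have correctly recognized that this is a conjecture, not a theorem, and that the paper offers no proof because none exists: the statement is false. Your summary matches the paper's own framing exactly --- the Klee--Walkup polyhedron~\cite{kleewalkup} refutes it in the unbounded case, Santos~\cite{santoscounterexample} in the bounded case, and the conjecture appears here only as historical motivation for the circuit-diameter relaxation that the paper actually studies. Nothing to correct.
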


This is not true in general. Klee and Walkup in \cite{kleewalkup} found a counterexample that is an unbounded polyhedron in dimension 4, with 8 facets and diameter 5; this is featured in the next
section. The bounded case was finally settled by Santos \cite{santoscounterexample}, which has stimulated activity in this area. The conjecture does however hold for many interesting classes of
polyhedra.
See \cite{santossurvey} for a survey of recent research related to the Hirsch conjecture.

% This motivates the statement of a similar conjecture for the circuit diameter. 
While the the Hirsch conjecture admits some hard-to-find exceptions for graph diameter,
the situation for circuit diameter is still unresolved.

\begin{myconjecture} \cite{circuitarxiv} \label{circuithirsch}
The circuit diameter of a $d$-dimensional polyhedron with $f$ facets is bounded above by $f-d$.
\end{myconjecture}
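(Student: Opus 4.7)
The natural plan is induction on the Hirsch quantity $f - d$. The base case $f = d + 1$ is a simplex, whose circuit diameter is $1 = f - d$ regardless of realization. For the inductive step, fix vertices $u,v$ of a $d$-polyhedron $P$ with $f$ facets and produce a circuit walk $u = y^0, y^1, \ldots, y^k = v$ with $k \leq f - d$. Since by \cite{sturmfels} the set $\C(A^1,A^2)$ realizes every possible edge direction as $b^1,b^2$ vary, one should try to exploit this abundance of directions to take a shortcut that is unavailable to an edge walk.

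The strategy I would pursue is a slice-and-recurse argument modeled on classical Hirsch-style proofs. At $u$, choose a circuit $g \in \C(A^1,A^2)$ pointing towards a facet $F$ that contains $v$ but not $u$, and take the maximal feasible step to obtain $y^1$. If $y^1$ happens to be a vertex of $P$ lying on $F$, then $F$ is a $(d-1)$-polyhedron with at most $f-1$ facets, and the inductive hypothesis supplies a circuit walk from $y^1$ to $v$ of length at most $(f-1)-(d-1) - 1 = f-d-1$, giving the desired $k \leq f-d$. The challenge is to check that such a $g$ exists in $\C(A^1,A^2)$ for every vertex $u$ and facet $F \not\ni u$ -- here one would try to argue via the minimal-support characterization of circuits, taking a suitable combination of a direction pointing toward $F$ and corrections in $\ker(A^1)$.

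The main obstacle is that a maximal circuit step generally does not terminate at a vertex of $P$: the endpoint $y^1$ can lie in the relative interior of a face of arbitrary dimension, depending on how many inequalities simultaneously become tight. Worse, the circuit set of a face is not simply the restriction of $\C(A^1,A^2)$, because enlarging the equality system $A^1$ to include the active facets of the face shrinks $\ker(A^1)$ and changes which vectors have support-minimal image under $A^2$. So even if one tries to continue inductively by treating the minimal face containing $y^1$ as a new polyhedron, the circuit structure is not inherited in a way that makes the inductive hypothesis directly applicable. Realization-dependence of the circuit diameter compounds this: any general argument must produce the bound uniformly across choices of $b^1,b^2$.

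Because of these obstructions a uniform proof seems out of reach, and the pragmatic line of attack -- the one pursued in this paper -- is to verify the bound on the known families of Hirsch counterexamples. For a fixed such $P^*$, one identifies the vertex pairs whose graph distance exceeds $f-d$ and, for each, exhibits an explicit circuit walk of length at most $f-d$, using only circuits guaranteed by $\C(A^1,A^2)$ independently of $b^1,b^2$. Establishing Conjecture~\ref{circuithirsch} for all currently known counterexamples, starting with the Klee-Walkup polyhedron, would give strong evidence for the general statement while deferring the structural difficulty above.
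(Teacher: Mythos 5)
The statement you were asked to prove is labeled a \emph{conjecture}, and the paper does not prove it; it remains open. You correctly recognize this, and your conclusion --- that the realistic line of attack is to verify the bound on specific known non-Hirsch examples, which is exactly what the paper does for the Klee--Walkup polyhedron $U_4$ in Theorem~\ref{U4thm} --- matches the paper's actual contribution.

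Your speculative inductive sketch is a reasonable first attempt, and the obstructions you identify are the genuine ones: a maximal circuit step need not terminate at a vertex (it can land in the relative interior of a higher-dimensional face), and the circuit set of a face is not simply inherited from $\C(A^1,A^2)$ once the active constraints are promoted to equalities, so the inductive hypothesis cannot be applied to a face as a standalone polyhedron. One small refinement worth noting: the paper's Theorem~\ref{U4thm} sidesteps the second obstruction by walking inside a $2$-face using circuit directions of the ambient $U_4$ that happen to be tangent to that face (namely, actual edge directions of $U_4$ lying in the face), rather than trying to invoke an intrinsic circuit structure on the face. This is a useful technique to have in mind if you pursue the case-by-case program you outline. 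But as a proof of Conjecture~\ref{circuithirsch} in general, your proposal correctly stops short --- no such proof is known, and the paper only supplies evidence for it.
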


In Section~\ref{se:polyhedron}, we show that for circuit diameter, the most basic non-Hirsch unbounded polyhedron actually does satisfy the Hirsch bound, independent of representation. This provides some evidence for Conjecture~\ref{circuithirsch} by establishing it in one place where Hirsch does not hold for the graph diameter.

A facet can be added to the Klee-Walkup counterexample to obtain a 4-dimensional \emph{Klee-Walkup polytope} with 9 facets and diameter exactly 5. Polyhedra whose graph diameter meets the Hirsch bound
are called \emph{Hirsch-sharp}. Other basic examples of these are the $d$-simplex and the $d$-cube, and certain polytopes arising from operations performed on Hirsch-sharp polytopes; see
\cite{fritzscheholt} and \cite{holtklee}.  In Section~\ref{se:polytope}, we investigate the circuit diameter of the Klee-Walkup polytope, with a view to Hirsch-sharpness.

%% SECTION: Klee-Walkup Polyhedron
\section{The Klee-Walkup Polyhedron}\label{se:polyhedron}

The first unbounded non-Hirsch polyhedron was given by Klee and Walkup in \cite{kleewalkup}, where they constructed a 4-dimensional polyhedron with 8 facets and diameter 5. Its facet description is $\{x \in \R^4 : Ax \geq b\}$ where 

$$A = \begin{pmatrix} -6 & -3 & 0 & 1 \cr
-3 & -6 & 1 & 0 \cr
-35 & -45 & 6 & 3 \cr
-45 & -35 & 3 & 6 \cr
1 & 0 & 0 & 0 \cr
0 & 1 & 0 & 0 \cr
0 & 0 & 1 & 0 \cr
0 & 0 & 0 & 1  \end{pmatrix}
\text{ and }
b = \begin{pmatrix}
-1 \cr -1 \cr -8 \cr -8 \cr 0 \cr 0 \cr 0 \cr 0
\end{pmatrix}. $$

We call the combinatorial class of this polyhedron $U_4$, and this particular realization by $\tilde U_4$. Its vertex-edge graph is shown in Figure~\ref{uQ4graph}. Here the vertices are indexed by the four facets containing each one, while the points labelled with \texttt{R}'s represent extreme rays. It is clear from the graph that vertices \texttt{V5678} and \texttt{V1234} are at graph distance five apart.

\begin{figure}[h]
\begin{center}
\includegraphics[width=0.45\textwidth]{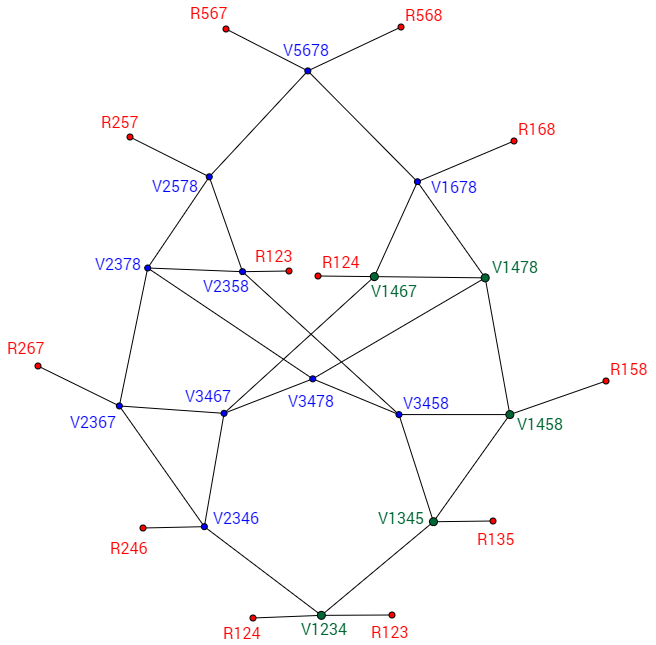}
\caption{The skeleton of $U_4$.}\label{uQ4graph}
\end{center}
\end{figure}

We now prove the following result:

\begin{mytheorem}\label{U4thm}
The circuit diameter of the Klee-Walkup polyhedron $U_4$ is at most 4, independent of representation.
\end{mytheorem}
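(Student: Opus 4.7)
The plan is to isolate the unique graph-diameter-achieving pair of vertices of $U_4$ and then construct circuit walks of length at most $4$ in each direction between them.

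From the graph of $U_4$ in Figure~\ref{uQ4graph}, the pair $\{\texttt{V1234}, \texttt{V5678}\}$ is the unique pair at graph distance $5$, and every other pair of vertices is at graph distance at most $4$. Because edge directions of $U_4$ are circuits and a step between adjacent vertices is already maximal in its direction, every edge walk is a circuit walk. Hence for any non-antipodal pair of vertices the circuit distance is at most $4$ in every realization. The proof therefore reduces to exhibiting, in an arbitrary realization, circuit walks of length at most $4$ from $\texttt{V5678}$ to $\texttt{V1234}$ and (separately, since circuit distance is not symmetric) from $\texttt{V1234}$ to $\texttt{V5678}$.

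For each of these two walks, I would take a first step in a non-edge circuit direction that shortcuts through the interior of $U_4$, landing at a point $y^1$ from which the target is reachable in at most three further steps (by edges or additional circuit steps). A circuit direction $g$ at a vertex is specified combinatorially by a triple of linearly independent rows of $A$, so the combinatorial identity of $g$ can be pinned down on the level of the face lattice of $U_4$. The target is to choose $g$ so that the maximal step from the starting vertex lands on a face $F$ of $U_4$ whose vertices include one at graph distance at most $2$ from the other antipode. The walk is then completed along $F$ by at most three edge steps, which are also circuit steps.

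The main obstacle is representation independence. Although the combinatorial type of $U_4$ is fixed, the numerical circuit vector $g$, the step length $\alpha$, and the specific face hit at the end of a maximal step depend on the realization through ratios $b_i / (Ag)_i$. The plan is to handle this by case analysis on the possible first-hit facets after the shortcut step: for the chosen direction $g$, enumerate the small set of facets that could be activated first and, in each case, verify that the resulting intermediate face still admits a completion in at most three more steps. The symmetries of $U_4$ — in particular the combinatorial involution exchanging $\texttt{V1234}$ and $\texttt{V5678}$ and the further symmetries within the two index blocks $\{1,2,3,4\}$ and $\{5,6,7,8\}$ — cut the case analysis substantially, and the simplicity and low dimension of $U_4$ keep the number of faces involved manageable.
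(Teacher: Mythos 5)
Your high-level reduction is correct and matches the paper's: isolate the pair $\{\texttt{V1234},\texttt{V5678}\}$ and build explicit circuit walks of length at most $4$ in each direction, exploiting the combinatorial symmetry swapping $\{1,2,3,4\}$ with $\{5,6,7,8\}$. But the mechanism that makes the construction representation-independent is missing from your sketch. The paper does \emph{not} lead with a shortcut circuit step. It starts with two \emph{edge} steps $\texttt{V5678}\to\texttt{V1678}\to\texttt{V1478}$, chosen precisely so that $\texttt{V1478}$ and $\texttt{V1234}$ lie on a common 2-face $F$ (the one determined by facets $1$ and $4$), which is an \emph{unbounded} six-facet polygon. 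The circuit step then uses the direction $\vec g$ of the edge $\texttt{V1458}$--$\texttt{V1345}$ of $F$. Everything is controlled by one observation about $F$: the direction $\vec r$ of the extreme ray $\texttt{R124}$ lies strictly inside the cone at $\texttt{V1458}$ spanned by its two incident edge directions, so $\vec g$ is a strict positive combination of $\vec r$ and the edge direction $\vec h$ from $\texttt{V1478}$ to $\texttt{V1458}$. This gives feasibility of $\vec g$ at $\texttt{V1478}$, and planarity together with $\vec g$ being parallel to the opposite edge of $F$ forces the maximal step to exit only at $\texttt{V1234}$, on the edge to $\texttt{V1345}$, or on the ray $\texttt{R124}$ --- each within a single circuit step of $\texttt{V1234}$.

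The missing idea is the role of unboundedness, which does double duty: the ray $\texttt{R124}$ supplies the conic combination needed for feasibility of $\vec g$, and it is one of the three controlled landing zones. Your sketch never invokes it. This is not a cosmetic omission: Section~\ref{se:polytope} of the paper shows that the identical strategy fails for the bounded Klee-Walkup polytope $Q_4$, where the same 2-face becomes a bounded $7$-gon and a representation-independent two-step finish from $\texttt{V1478}$ is no longer guaranteed. A generic ``case analysis on possible first-hit facets'' with no a priori constraint on where the shortcut step lands cannot be completed, because, as you yourself note, the landing face depends on the realization through ratios $b_i/(A\vec g)_i$; what the paper supplies and your proposal lacks is the geometric device (confinement to a 2-face, a direction parallel to a nearby edge, and an extreme ray) that collapses the possible landings to three explicit, realization-independent cases.
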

\begin{proof}
First we demonstrate the existence of a circuit walk of length 4 from \texttt{V5678} to \texttt{V1234}. Observe that we can take two edge steps as follows: \texttt{V5678} $\rightarrow$ \texttt{V1678} $\rightarrow$ \texttt{V1478}. Vertices \texttt{V1478} and \texttt{V1234} are both contained in the 2-face determined by facets \texttt{1} and \texttt{4}, so we can complete the walk on this face. Note that this 2-face is an unbounded polyhedron on six facets. Figure~\ref{uQ4face14a} is a topological illustration of this face, showing the order of the vertices and rays.

\begin{figure}[htbp]
\begin{center}
\includegraphics[width=0.35\textwidth]{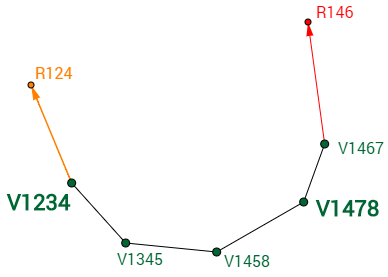}
\caption{The 2-face determined by facets \texttt{1} and \texttt{4}.} \label{uQ4face14a}
\end{center}
\end{figure}

Now consider a vector $\vec g$ corresponding to the edge direction from \texttt{V1458} to \texttt{V1345} -- this is the blue vector in Figure~\ref{uQ4face14a_feas}. Note that this is always a circuit direction in any representation of $U_4$ since it corresponds to an actual edge of the polyhedron.

To see that $\vec g$ is a feasible direction at \texttt{V1478}, consider vector $\vec h$ in the edge direction from \texttt{V1478} to \texttt{V1458}, and vector $\vec r$ in the direction of ray \texttt{R124}. Observe that $\vec g$ and $-\vec h$ are the two incident edge directions at \texttt{V1458}, and so $\vec r$ must be a strict conic combination of $\vec g$ and $-\vec h$, i.e. $\vec r = \alpha_1 (\vec g) + \alpha_2 (-\vec h)$ for $\alpha_1, \alpha_2 > 0$. By rearranging terms we see that $\vec g$ is a strict conic combination of $\vec h$ and $\vec r$: $\vec g = (\alpha_2/\alpha_1)\vec h + (1/\alpha_1)\vec r$, with $\alpha_2/\alpha_1, 1/\alpha_1 > 0$. Feasibility of $\vec r$ and $\vec h$ at \texttt{V1478} implies that $\vec g$ is a feasible direction at \texttt{V1478}.

\begin{figure}[htbp]
\begin{center}
\includegraphics[width=0.4\textwidth]{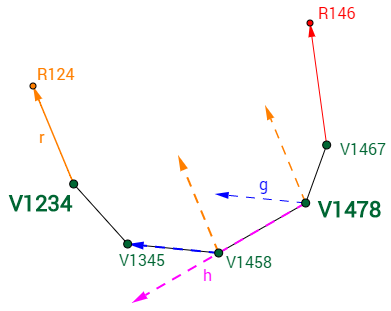}
\caption{Feasibility of the circuit direction $\vec g$.} \label{uQ4face14a_feas}
\end{center}
\end{figure}

Now starting at \texttt{V1478} traverse $\vec g$ as far as feasibility allows. This direction is bounded since we exit the polyhedron when taking $\vec g$ from \texttt{V1458}. We will eventually exit the 2-face at a point along the boundary, and at one of the following positions:

\begin{itemize}
\item exactly at \texttt{V1234},
\item on the edge connecting \texttt{V1234} and \texttt{V1345}, or
\item on the ray \texttt{R124} emanating from \texttt{V1234}.
\end{itemize}

Hitting exactly \texttt{V1234} gives a circuit walk of length 3 from \texttt{V5678}, while the other two cases give circuit walks of length 4 since we only need one step to \texttt{V1234}. These two situations are illustrated in Figure~\ref{uQ4face14}.

\begin{figure}[htbp]
\begin{center}
\includegraphics[width=0.85\textwidth]{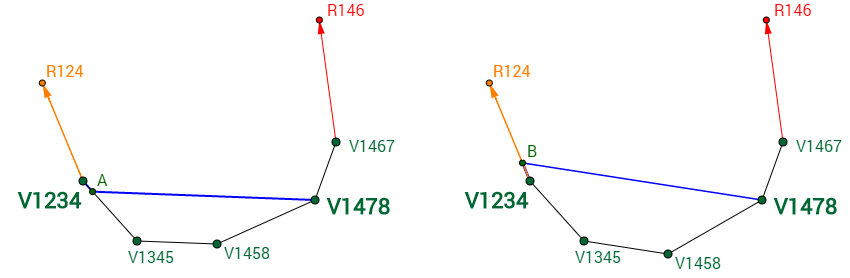}
\caption{Getting from \texttt{V1478} to \texttt{V1234} in at most 2 steps.} \label{uQ4face14}
\end{center}
\end{figure}

The argument is the same for the reverse direction (\texttt{V1234} to \texttt{V5678}). We can construct a similar walk by first traversing edges \texttt{V1234} $\rightarrow$ \texttt{V2346} $\rightarrow$ \texttt{V3467}, and then taking a maximal step in the circuit direction arising from the edge connecting \texttt{V1467} and \texttt{V1678}. Here we stay in the 2-face determined by facets \texttt{6} and \texttt{7}. We can then arrive at \texttt{V5678} in at most two steps from \texttt{V3467}. 
\end{proof}

This result illustrates that the first counterexample to the unbounded Hirsch conjecture does have a circuit diameter satisfying the Hirsch bound. We remark that we computed the circuit diameter for $\tilde U_4$ using brute force and it was 4. We do not know if it could be lower in some other representation. Our computational strategy is outlined in Section~\ref{app:comp}.

%% SECTION: THE KLEE-WALKUP POLYTOPE
\section{The Klee-Walkup Polytope} \label{se:polytope}

By applying a projective transformation to $U_4$ we can obtain a bounded 4-dimensional polytope with $f = 9$ and diameter 5. As in \cite{kimsantos}, we call this polytope $Q_4$. Since its diameter is exactly $5 = f - d = 9 - 4$, $Q_4$ is \emph{Hirsch-sharp} -- this is the smallest Hirsch-sharp polytope outside of the classes mentioned in the previous section. With this in mind, we investigate the circuit diameter of $Q_4$. 

\begin{figure}[htbp]
\begin{center}
\includegraphics[width=0.8\textwidth]{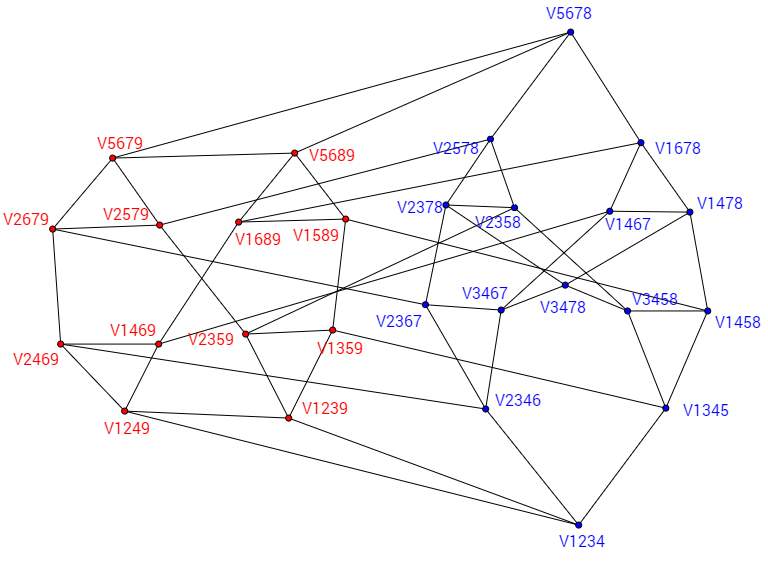}
\caption{The skeleton of $Q_4$.} \label{Q4skel}
\end{center}
\end{figure}

The graph of $Q_4$ is shown in Figure~\ref{Q4skel}. Here the red vertices are the ones obtained by adding the ninth facet to $U_4$. It is easy to see that $Q_4$ has graph diameter 5: starting at \texttt{V5678} and using only blue vertices, we  stay in $U_4$, and \texttt{V1234} is at distance 5; if we move to a red vertex at any point, we will need at least 4 other steps to introduce facets \texttt{1}, \texttt{2}, \texttt{3}, and \texttt{4}.

The arguments given in the proof of Theorem~\ref{U4thm} cannot be applied to $Q_4$ because we lose unboundedness. That is, in the 2-face determined by facets \texttt{1} and \texttt{4} (which is now a
bounded 7-gon), we cannot anymore be certain that we can get to \texttt{V1234} in two steps  from \texttt{V1478} (or \texttt{V1467}, which is at distance 2 from \texttt{V5678}). Figure~\ref{7gon}
illustrates a situation where we would need three steps to arrive at \texttt{V1234} coming from either \texttt{V1478} or \texttt{V1467}.  
It is not clear if $Q_4$ can be realized with the 2-face as illustrated.  In general, it is not always possible to prescribe the
shape of a 2-face when realizing a 4-polytope~\cite{realization}.

\begin{figure}[htbp]
\begin{center}
\includegraphics[width=0.45\textwidth]{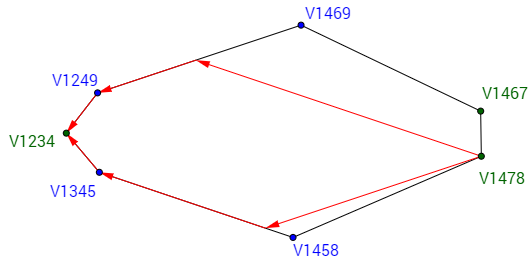}
\caption{At least three steps are needed to go from \texttt{V1478} to \texttt{V1234}.} \label{7gon}
\end{center}
\end{figure}

It is interesting to note, however, that the non-existence of a 2-walk from \texttt{V1478} to \texttt{V1234} in this face implies the existence of a 2-walk from \texttt{V1234} to \texttt{V1478}, and hence a circuit walk of length 4 from \texttt{V1234} to \texttt{V5678} (see Figure~\ref{7gon2}). As a consequence we are able to attain a walk of distance 4 between \texttt{V1234} and \texttt{V5678} \emph{for at least one direction}, irrespective of representation.
This shows that for certain alternative notions of circuit diameter presented by Borgwardt et al.~\cite{hierarchy}, $Q_4$ is no longer Hirsch-sharp.

\begin{figure}[htbp]
\begin{center}
\includegraphics[width=0.45\textwidth]{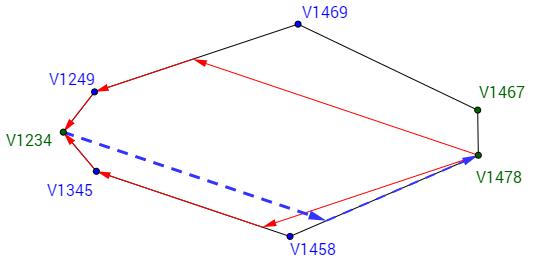}
\caption{From \texttt{V1234} to \texttt{V1478} in two steps.} \label{7gon2}
\end{center}
\end{figure}

We speculate that no representation of $Q_4$ is circuit Hirsch-sharp, i.e.~that the circuit diameter is at most 4. Below we show one representation of $Q_4$ that has circuit diameter 3, and another that has circuit diameter 4.

% SUBSECTION: A nice representation for Q4
\subsection{A Representation of $Q_4$ with Circuit Diameter 3}\label{app:niceq4}

We first consider the representation of $Q_4$, given by the facet description $\tilde Q_4 = \{x \in \R^4 : Ax \geq b\}$ from \cite{kimsantoscomp}. This representation has several symmetries in its coefficients:

$$
A = \begin{pmatrix} 3 & -3 & -1 & -2 \cr
-3 & 3 & -1 & -2 \cr
-2 & 1 & -1 & -3 \cr
2 & -1 & -1 & -3 \cr
-3 & -3 & 1 & -2 \cr
3 & 3 & 1 & -2 \cr
1 & 2 & 1 & -3 \cr
-1 & -2 & 1 & -3 \cr
0 & 0 & 0 & 2 \cr
\end{pmatrix}
\text{ and }
b = \begin{pmatrix} -1 \cr -1 \cr -1 \cr -1 \cr -1 \cr -1 \cr -1 \cr -1 \cr -1 \end{pmatrix}.
$$

Its $f$-vector is $(27,54,36,9)$, and its graph diameter is 5 (refer to Figure~\ref{Q4skel} for its graph). Other than \texttt{V1234} and \texttt{V5678}, all other pairs of vertices are connected by paths of lengths at most 4.

By brute force enumeration, we are able to generate a circuit walk of length 2 from vertex \texttt{3} to vertex \texttt{15}, that uses the circuits $g^0 = (-1,0,-3,0)^T$ and $g^1 = (1,0,-3,0)^T$. This shows in fact that the circuit diameter of $\tilde Q_4$ is less than 5.

Further, by an exhaustive search we find circuit walks of length 3 between every other pair of vertices. Some of these pairs admit a walk of length 2 while some pairs do not. Hence $\tilde Q_4$ has circuit diameter 3. See Appendix~\ref{app:comp} for more details of this computation.

\subsection{A Representation of $Q_4$ with Circuit Diameter 4}\label{app:q4diam3}

We are able to use the same methods to explore the circuit diameters of perturbed, less symmetric versions of $Q_4$, with the goal of finding representations with different diameters. For instance, consider the realization $\tilde Q_4' = \{x \in \R^4: Cx \geq b\}$, where $$C = \pmat{ 3.2 & -3 & -1 & -2 \cr
-3 & 3.2 & -1 & -2 \cr
-2 & 1 & -1 & -3 \cr
2 & -1 & -1 & -3 \cr
-3 & -3 & 1.05 & -2 \cr
3 & 3 & 1.05 & -2 \cr
1.05 & 2 & 1 & -3 \cr
-1 & -2.05 & 1 & -3 \cr
0 & 0 & 0 & 2} = A + \pmat{0.2 & 0 & 0 & 0 \cr
0 & 0.2 & 0 & 0 \cr
0 & 0 & 0 & 0 \cr
0 & 0 & 0 & 0 \cr
0 & 0 & 0.05 & 0 \cr
0 & 0 & 0.05 & 0 \cr
0.05 & 0 & 0 & 0 \cr
0 & -0.05 & 0 & 0 \cr
0 & 0 & 0 & 0 },$$

and $A$ and $b$ are as defined in the standard representation. (A quick check is done in \texttt{polymake} to verify the isomorphism and generate the vertices.) Vertices \texttt{V5678} and \texttt{V1234} of $\tilde Q_4'$, while still separated by a distance-5 path along the skeleton of the polytope, are now at circuit distance 4 apart. Therefore $\tilde Q_4'$ has circuit diameter exactly $4$, as there are multiple pairs of vertices separated by circuit distance 4.  This brings to light the fragility of the circuit diameter with respect to geometric realization.

%% SECTION: Computational Details
\section{Computational Details}\label{app:comp}

We performed the circuit diameter computations using \texttt{polymake} (\cite{polymake}), MATLAB (\cite{matlab}), and Maple (\cite{maple}). Given the representation $\{x \in \R^4 : Ax \geq b\}$ of
$Q_4$, we first use \texttt{polymake} to compute its vertices, then we compute circuit directions in MATLAB. Since each $4 \times 4$ submatrix of $A$ is nonsingular, we are able to compute all vectors
$g$ such that $Ag$ is support minimal by solving the systems $A_{ijk}g = 0$ for $1 \leq i < j < k \leq 9$, where $A_{ijk}$ is the $3 \times 4$ submatrix of $A$ consisting of rows $i$, $j$, and $k$.
Each $A_{ijk}$ matrix is rank 3 and so the solution to $A_{ijk}g = 0$ is a line passing through the origin. Normalizing to coprime integer components gives the required circuit direction $g$. Any circuit step can then be computed using a function that, given a starting point $x$ and a circuit direction $g$ as inputs, finds $\alpha_g$ so that $A(x + \alpha_g g) \geq b$ but $A(x + \alpha g) \not\geq b$ for $\alpha > \alpha_g$.

We then enumerate exhaustively all circuit walks of length 2 or 3 emanating from any point, by considering all possible triples of circuit directions. With $Q_4$, we have 84 circuit directions, and taking into account both the positive and negative of each one, we find that there are at most $168\cdot (166)^2 = 4629408$ triples to check. The MATLAB code can perform this check in a few minutes. Afterwards, to check for circuit walks of length 4, we use the set of points output by the above enumeration procedure and compute circuit steps for each feasible circuit. Again, this does not take long to run. We perform checks by hand or using Maple since some precision is lost when using MATLAB for exact arithmetic. All these computations were performed on a laptop in a few minutes.

Using the above method, we found that $\tilde  U_4$ has circuit diameter 4, $\tilde Q_4$ has circuit diameter 3, and $\tilde Q_4'$ has circuit diameter 4.

%% SECTION: Acknowledgements
\section{Acknowledgements} This research was partially supported by an NSERC Discovery Grant for T. Stephen, and an NSERC Postgraduate Scholarship-D for T. Yusun. All illustrations were produced using the GeoGebra software\footnote{http://www.geogebra.org, International GeoGebra Institute.}.

\bibliographystyle{halpha}
\bibliography{referencesB}

\end{document}